\documentclass[11pt]{article}
\usepackage{amssymb,amsmath,amsfonts,enumerate,latexsym,pifont}
\usepackage{amsthm}
\usepackage{mathptmx}
\usepackage{gastex}
\usepackage{url}

\theoremstyle{plain}
\newtheorem{theorem}{Theorem}
\newtheorem{lemma}[theorem]{Lemma}

\theoremstyle{remark}
\newtheorem{remark}{Remark}

\predisplaypenalty=0

\newcommand{\ais}{ai-semi\-ring}
\def\softd{{\leavevmode\setbox1=\hbox{d}%
    \hbox to 1.05\wd1{d\kern-0.4ex{\char039}\hss}}}

\title{Semiring identities of the Brandt monoid\thanks{Supported by the Ministry of Science and Higher Education of the Russian Federation (Ural Mathematical Center project No. 075-02-2020-1537/1)}}
\author{Mikhail Volkov\\
Institute of Natural Sciences and Mathematics\\
Ural Federal University\\
m.v.volkov@urfu.ru}

\date{}

\begin{document}

\maketitle

\begin{abstract}
The 6-element Brandt monoid $B_2^1$ admits a unique addition under which it becomes an additively idempotent semiring. We show that this addition is a term operation of $B_2^1$ as an inverse semigroup. As a consequence, we exhibit an easy proof that the semiring identities of $B_2^1$ are not finitely based.
\end{abstract}

We assume the reader's acquaintance with basic concepts of universal algebra such as an identity and a variety; see, e.g., \cite[Chapter~II]{BuSa81}.

The 6-element Brandt monoid $B_2^1$ can be represented as a semigroup of the following zero-one $2\times 2$-matrices
\begin{equation}
\label{eq:b21}
\begin{tabular}{cccccc}
$\left(\begin{matrix} 0&0\\0&0\end{matrix}\right)$
&
$\left(\begin{matrix} 1&0\\0&1\end{matrix}\right)$
&
$\left(\begin{matrix} 0&1\\0&0\end{matrix}\right)$
&
$\left(\begin{matrix} 0&0\\1&0\end{matrix}\right)$
&
$\left(\begin{matrix} 1&0\\0&0\end{matrix}\right)$
&
$\left(\begin{matrix} 0&0\\0&1\end{matrix}\right)$
\\
\rule{0cm}{.5cm}$0$&$E$&$E_{12}$&$E_{21}$&$E_{11}$&$E_{22}$
\end{tabular}
\end{equation}
under the usual matrix multiplication $\cdot$ or as a monoid with presentation
\[
\langle E_{12},E_{21}\mid E_{12}E_{21}E_{12}=E_{12},\ E_{21}E_{12}E_{21}=E_{21},\ E_{12}^2=E_{21}^2=0\rangle.
\]
Quoting from a recent paper \cite{JackZhang21}, `This Brandt monoid is perhaps the most ubiquitous harbinger of complex behaviour in all finite semigroups'. In particular, $(B_2^1,\cdot)$ has no finite basis for its identities (Perkins \cite{Per66,Per69}) and is one of the four smallest semigroups with this property (Lee and Zhang \cite{LeeZhang2015}).

The monoid $(B_2^1,\cdot)$ has a natural involution that swaps $E_{12}$ and $E_{21}$ and fixes all other elements. In terms of the matrix representation \eqref{eq:b21} this involution is nothing but the usual matrix transposition; we will, however, use the notation $x\mapsto x^{-1}$ for the involution, emphasizing that $x^{-1}$ is the unique inverse of $x$. Recall that elements $x,y$ of a semigroup $(S,\cdot)$ are said to be \emph{inverses} of each other if $xyx=x$ and $yxy=y$. A~semigroup is called \emph{inverse} if every its element has a unique inverse; inverse semigroups can therefore be thought of as algebras of type (2,1). Being considered as an inverse semigroup, the monoid $(B_2^1,\cdot,{}^{-1})$ retains its complex equational behaviour: $B_2^1$ has no finite basis for its inverse semigroup identities (Kleiman \cite{Kle79}) and is the smallest inverse semigroup with this property (Kleiman \cite{Kle77,Kle79}).

In the present note we consider equational properties of yet another enhancement of the monoid $(B_2^1,\cdot)$ with an additional operation, this time binary. Recall that an \emph{additively idempotent semiring} an algebra $(S, +, \cdot)$ of type $(2, 2)$ such that the additive reduct $(S,+)$ is a \emph{semilattice} (that is, a commutative idempotent semigroup), the multiplicative reduct $(S,\cdot)$ is a semigroup, and multiplication distributes over addition on the left and on the right, that is, $(S,+,\cdot)$ satisfies the identities $x(y+z)\approx xy+xz$ and $(y+z)x\approx yx+zx$. In papers which motivation comes from semigroup theory, objects of this sort sometimes appear under the name \emph{semilattice-ordered semigroups}, see, e.g., \cite{KP05} or \cite{McA07}. We will stay with the term `additively idempotent semiring', abbreviated to `\ais' in the sequel.

Our key observation is the following:

\begin{lemma}
\label{lem:b21-as-ais}
Let $(S,\cdot,{}^{-1})$ be an inverse semigroup satisfying the identity
\begin{equation}
\label{eq:comb}
x^n\approx x^{n+1}
\end{equation}
for some $n$. Define
\[
x\oplus y:=(xy^{-1})^nx.
\]
Then $(S,\cdot,\oplus)$ is an \ais.
\end{lemma}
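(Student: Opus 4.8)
The plan is to identify the operation $\oplus$ as the \emph{meet} with respect to the natural partial order $\le$ of the inverse semigroup $S$: I will show that $x\oplus y$ is the greatest lower bound of $\{x,y\}$ in $(S,\le)$ for all $x,y\in S$. Once that is done, $(S,\oplus)$ is a semilattice for free, because a poset in which every two-element subset has a meet is exactly a semilattice with $\oplus=\wedge$; in particular $\oplus$ is automatically commutative, associative and idempotent, so none of those three laws needs a separate argument. Throughout I will use only standard facts about inverse semigroups: idempotents commute and their products are again idempotent, each idempotent is self-inverse, $(uv)^{-1}=v^{-1}u^{-1}$, the relation $a\le b$ is equivalent to $a=aa^{-1}b$, to its dual $a=ba^{-1}a$, and to the existence of an idempotent $e$ with $a=eb$, the map $u\mapsto u^{-1}$ is an order automorphism, and $\le$ is compatible with multiplication, i.e.\ $a\le b$ and $c\le d$ imply $ac\le bd$ (so in particular $ca\le cb$ and $ac\le bc$).

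To prove $x\oplus y=x\wedge y$, set $a:=xy^{-1}$. By \eqref{eq:comb} the powers of $a$ are constant from the $n$-th on, so $e:=a^{n}$ is idempotent; being idempotent it is self-inverse, whence $e=e^{-1}=(a^{-1})^{n}=(yx^{-1})^{n}$. Now $x\oplus y=ex$, which is $\le x$ since $e$ is idempotent; moreover $(ex)^{-1}(ex)=x^{-1}ex$, while $(yx^{-1})e=(yx^{-1})^{n+1}=(yx^{-1})^{n}=e$ by \eqref{eq:comb}, so $y\,(ex)^{-1}(ex)=yx^{-1}ex=ex$, which says $x\oplus y=ex\le y$. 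Hence $x\oplus y$ is a lower bound of $\{x,y\}$. For maximality, let $z\le x$ and $z\le y$; then also $z^{-1}\le y^{-1}$, and compatibility of $\le$ with multiplication gives $zz^{-1}\le xy^{-1}=a$. As $zz^{-1}$ is an idempotent lying below $a$, it satisfies $zz^{-1}a=zz^{-1}$, hence $zz^{-1}a^{k}=zz^{-1}$ for every $k\ge1$, and in particular $zz^{-1}e=zz^{-1}$; therefore $z=zz^{-1}x=zz^{-1}ex=zz^{-1}(x\oplus y)$, so $z\le x\oplus y$. Thus $x\oplus y$ is the meet of $x$ and $y$, and $(S,\oplus)$ is a semilattice.

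It remains to establish the two distributive laws, which, using the previous step, become the identities $x(a\wedge b)=xa\wedge xb$ and $(a\wedge b)x=ax\wedge bx$; that is, I must show that left and right translations preserve meets. In each identity one inequality is immediate, since multiplication is order-preserving. For the reverse inequality in the first identity, put $c:=xa\wedge xb$ and $h:=cc^{-1}x$; from $c\le xa$ and $c\le xb$ we get $c=ha=hb$, and left-multiplication by $h^{-1}$ gives $ka=kb$ with $k:=h^{-1}h$ an idempotent. Then $ka\le a$ and $ka=kb\le b$, so $ka$ is a common lower bound of $a$ and $b$, whence $ka\le a\wedge b$; multiplying this on the left by $k$, and using $a\wedge b\le a$ for the opposite inequality, we obtain $k(a\wedge b)=ka$. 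Consequently $c=ha=h(h^{-1}h)a=h\,ka=h\,k(a\wedge b)=h(a\wedge b)=cc^{-1}\,x(a\wedge b)$, so $c\le x(a\wedge b)$, as needed. The second identity follows by the symmetric argument (with the dual description of $\le$ and right translations), or more quickly from the observation that $u\mapsto u^{-1}$ is an anti-automorphism of $(S,\cdot)$ and, since $\oplus=\wedge$ and $u\mapsto u^{-1}$ is an order automorphism, an automorphism of $(S,\oplus)$. This completes the proof that $(S,\cdot,\oplus)$ is an \ais.

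The work with the natural partial order is routine; the single step that genuinely needs an idea is the distributive law, namely the inclusion $xa\wedge xb\le x(a\wedge b)$ and its dual---the fact that translations of these semigroups preserve meets. It is also worth pinpointing where hypothesis \eqref{eq:comb} really enters: only in making $(xy^{-1})^{n}$ idempotent and in making the idempotents below $xy^{-1}$ absorb its powers; drop \eqref{eq:comb} and even the existence of $x\wedge y$, hence of $\oplus$, can fail.
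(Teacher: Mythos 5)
Your proof is correct, and its core is the same as the paper's: you identify $x\oplus y$ with the meet of $\{x,y\}$ under the natural partial order, which gives the semilattice laws for free, and your verifications of the three required inequalities are minor rearrangements of the paper's computations (the paper obtains $a\oplus b\le b$ by rewriting $(ab^{-1})^na$ as $(ab^{-1})^n\,b\,(b^{-1}a)^n$, while you use the characterization $c\le b$ iff $c=b\,c^{-1}c$ together with $yx^{-1}e=e$; for the greatest-lower-bound property the paper writes $c=(cc^{-1})^nc\le(ab^{-1})^na$, while you use that the idempotent $zz^{-1}$ lies below $xy^{-1}$ and hence absorbs its powers --- both hinge on \eqref{eq:comb} in exactly the same way). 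The one point where you genuinely depart from the paper is distributivity: the paper simply cites the known fact (Schein; see also Lawson's book) that left and right translations of an inverse semigroup preserve existing infima, whereas you prove the binary case of that fact from scratch via $h=cc^{-1}x$ and the idempotent $k=h^{-1}h$, and then deduce the right-handed law from the left-handed one by applying the anti-automorphism $u\mapsto u^{-1}$, which is an order automorphism and hence preserves meets. That argument is correct and makes the proof self-contained at the cost of an extra paragraph; it is in effect a direct proof of the proposition the paper cites.
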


\begin{proof}
Let $E(S)$ stand for the set of all idempotents of $S$. The relation
\[
\le:= \{(a,b)\in S\times S\mid a=eb\ \text{ for some } e\in E(S)\}
\]
is a partial order on $S$ referred to as the \emph{natural partial order}; see \cite[Section II.1]{Pet84} or \cite[pp. 21--23]{Law99}. We need two basic properties of the natural partial order:

1) $\le$ is compatible with both multiplication and inversion;

2) $a\le b$ if and only if $a=bf$ for some $f\in E(S)$.

Take any $a,b\in S$ and suppose that $c\le a$ and $c\le b$. Then $c^{-1}\le b^{-1}$ whence by the compatibility with multiplication
\[
c=(cc^{-1})^nc\le (ab^{-1})^na=a\oplus b.
\]
In presence of the identity \eqref{eq:comb}, $(ab^{-1})^n=(ab^{-1})^{n+1}=\cdots=(ab^{-1})^{2n}$. Hence
\[
a\oplus b=(ab^{-1})^n\cdot a\le a.
\]
Further,
\begin{multline*}
a\oplus b=(ab^{-1})^n a=(ab^{-1})^{n+1} a=\cdots=(ab^{-1})^{2n-1} a={} \\
(ab^{-1})^{n}\cdot (ab^{-1})^{n-1}a = (ab^{-1})^n\cdot a(b^{-1}a)^{n-1}={}\ \text{ (using $b^{-1}=b^{-1}bb^{-1}$)}\\
(ab^{-1})^n\cdot b\cdot (b^{-1}a)^n\le b\cdot (b^{-1}a)^n\le b
\end{multline*}
since $(b^{-1}a)^n\in E(S)$. We see that $a\oplus b$ is nothing but the infimum of $\{a,b\}$ with respect to the natural partial order. Thus, $(S,\oplus)$ is a semilattice. It is known \cite[Proposition 1.22]{Sch73}, see also \cite[Proposition 19]{Law99} that if a subset $H\subseteq S$ possesses an infimum under the natural partial order, then so do the subsets $sH$ and $Hs$ for any $s\in S$, and $\inf(sH) = s(\inf H)$, $\inf(Hs) = (\inf H)s$. This implies that multiplication distributes over $\oplus$ on the left and on the right.
\end{proof}

\begin{remark}
The essence of Lemma~\ref{lem:b21-as-ais} is known. Leech, in the course of his comprehensive study of inverse monoids $(S,\cdot,{}^{-1},1)$ that are inf-semilattices under the natural partial order, has verified that $(S,\le)$ is a inf-semilattice whenever $S$ is a periodic combinatorial\footnote{A semigroup $S$ is \emph{periodic} if all monogenic subsemigroups of $S$ are finite and \emph{combinatorial} if all subgroups of $S$ are trivial.} inverse monoid; see \cite[Example 1.21(d), item (iv)]{Leech95}. Of course, the requirement of $S$ being a monoid is not essential: if a semigroup $S$ periodic and combinatorial then so is the monoid $S^1$ obtained by adjoining a formal identity to $S$. Clearly, if a semigroup satisfies \eqref{eq:comb}, then it is both periodic and combinatorial whence Leech's observation applies. We have preferred the above direct proof of Lemma~\ref{lem:b21-as-ais} because we need a $(\cdot,{}^{-1})$-term for the semilattice operation, and such a term is not explicitly present in~\cite{Leech95}.
\end{remark}

Obviously, the 6-element Brandt monoid satisfies the identity $x^2\approx x^3$. Thus, Lemma~\ref{lem:b21-as-ais} applies, and $(B_2^1,\oplus,\cdot)$ is an \ais.
It is known (and easy to verify) that $\oplus$ is the only addition on $B_2^1$ under which $B_2^1$ becomes an \ais.

Our main result states that, similarly to the plain semigroup $(B_2^1,\cdot)$ and the inverse semigroup $(B_2^1,\cdot,{}^{-1})$, the \ais\ $(B_2^1,\oplus,\cdot)$ admits no finite identity basis. Its proof employs a series of inverse semigroups $C_n$, $n=2,3,\dotsc$, constructed in~\cite{Kle79} as semigroups of partial one-to-one transformations. Here, to align with the matrix representation chosen for the $B_2^1$, we describe them as semigroups of zero-one matrices.

The set $R_m$ of all zero-one $m\times m$-matrices which have at most one entry equal to 1 in each row and column forms an inverse monoid under usual matrix multiplication $\cdot$ and transposition. The inverse monoid $R_m$ is called the \emph{rook monoid}\footnote{The rook monoid is nothing but the matrix representation of the \emph{symmetric inverse monoid}; see \cite[Section IV.1]{Pet84} or \cite[p. 6]{Law99}. The name `rook monoid' was suggested by Solomon \cite{Sol02}.} as its matrices encode placements of nonattacking rooks on an $m\times m$ chessboard.

Let $m=2n+1$ and define $m\times m$-matrices $c_1,\dots,c_n$ by
\[
c_k:=E_{k+1\,k}+E_{n+k\ n+k+1},\ \ k=1,\dots,n,
\]
where, as usual, $E_{ij}$ denotes the $m\times m$-matrix unit with an entry 1 in the $(i,j)$ position and 0's elsewhere. For instance, if $n=2$, then $c_1$ and $c_2$ are the following $5\times 5$-matrices:
\[
c_1=\begin{pmatrix}
0 & 0 & 0 & 0 & 0\\
1 & 0 & 0 & 0 & 0\\
0 & 0 & 0 & 1 & 0\\
0 & 0 & 0 & 0 & 0\\
0 & 0 & 0 & 0 & 0
\end{pmatrix},\qquad
c_2=\begin{pmatrix}
0 & 0 & 0 & 0 & 0\\
0 & 0 & 0 & 0 & 0\\
0 & 1 & 0 & 0 & 0\\
0 & 0 & 0 & 0 & 1\\
0 & 0 & 0 & 0 & 0
\end{pmatrix}.
\]
Let $C_n$ be the inverse subsemigroup of the rook monoid $R_m$ generated by the matrices $c_1,\dots,c_n$. As a plain subsemigroup, $C_n$ is generated by $c_1,\dots,c_n$ and their inverses (i.e., transposes) $c_1^{-1},\dots,c_n^{-1}$.

The next lemma collects properties of the semigroups $C_n$ that we need.
\begin{lemma}
\label{lem:cn}
\emph{(i)} The semigroup $(C_n,\cdot)$ does not belong to the semigroup variety generated by the monoid $(B_2^1,\cdot)$.

\emph{(ii)} The semigroup $(C_n,\cdot)$ satisfies the identity $x^2\approx x^3$.

\emph{(iii)} For each $k=1,\dots n$, $M_k(n):=C_n\setminus\{c_k,c_k^{-1}\}$ forms an inverse subsemigroup of the inverse semigroup $(C_n,\cdot,{}^{-1})$.

\emph{(iv)} For each $k=1,\dots n$, the inverse semigroup $(M_k(n),\cdot,{}^{-1})$  belongs to the inverse semigroup variety generated by the inverse monoid $(B_2^1,\cdot,{}^{-1})$.
\end{lemma}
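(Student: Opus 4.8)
The plan is to reduce all four items to an explicit description of $C_n$ as a set of matrices. Computing products of the generators one finds $c_jc_{j-1}\cdots c_i=E_{j+1,i}$ and $c_ic_{i+1}\cdots c_j=E_{n+i,n+j+1}$ for all $1\le i\le j\le n$, so that every matrix unit whose two indices lie on a common ``chain'' $1,2,\dots,n+1$ or $n+1,n+2,\dots,2n+1$ already occurs in $C_n$; since the index $n+1$ lies on both chains, multiplying two such units through it shows that in fact $E_{pq}\in C_n$ for \emph{every} pair $p,q$. A finite inspection of the multiplication table then confirms that nothing else appears:
\[
C_n=\{0\}\cup\{E_{pq}:1\le p,q\le 2n+1\}\cup\{c_j,\,c_j^{-1},\,c_jc_j^{-1},\,c_j^{-1}c_j:1\le j\le n\}.
\]
Thus every element of $C_n$ is a zero-one matrix of rank at most $2$; the rank-$\le1$ part $\{0\}\cup\{E_{pq}\}$ is the Brandt semigroup of degree $2n+1$ and is an ideal of $C_n$. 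All four items are read off from here.

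Items (ii) and (iii) are then routine. For (ii): $E_{pq}^2=E_{pq}^3$ (it equals $E_{pp}$ if $p=q$ and $0$ otherwise), $c_j^2=(c_j^{-1})^2=0$, and $c_jc_j^{-1}$, $c_j^{-1}c_j$ are idempotent, so $C_n$ satisfies $x^2\approx x^3$. For (iii): the rank of a product of zero-one matrices cannot exceed that of either factor, so if $xy$ has rank $2$ then $x$ and $y$ both lie in the $4n$-element set $\{c_j^{\pm1},\,c_jc_j^{-1},\,c_j^{-1}c_j\}$; scanning the products of these, one sees that a product equals $c_k$ only as $(c_kc_k^{-1})c_k$ or $c_k(c_k^{-1}c_k)$, and equals $c_k^{-1}$ only as $(c_k^{-1}c_k)c_k^{-1}$ or $c_k^{-1}(c_kc_k^{-1})$, in each case with $c_k$ or $c_k^{-1}$ among the factors. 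Hence $x,y\in M_k(n)$ forces $xy\notin\{c_k,c_k^{-1}\}$, and since $\{c_k,c_k^{-1}\}$ is closed under inversion, $M_k(n)$ is an inverse subsemigroup of $C_n$.

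Item (i) is the first point where an idea is needed. As the variety in question is generated by the finite monoid $B_2^1$, it suffices to exhibit a semigroup identity $\mathbf u_n\approx\mathbf v_n$ that holds in $(B_2^1,\cdot)$ but fails in $(C_n,\cdot)$. I would look for a word $\mathbf u_n$ on about $n$ letters, together with a substitution $x_j\mapsto c_j$ (or $c_j^{-1}$) under which $\mathbf u_n$ traces the chain $1,2,\dots,n+1$ and hence evaluates in $C_n$ to the nonzero unit $E_{n+1,1}$, and a companion word $\mathbf v_n$ obtained by one local change under which the same substitution evaluates to $0$ (the description above says precisely when a product of generators collapses to $0$). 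The delicate part is to arrange that $(B_2^1,\cdot)$ \emph{does} satisfy $\mathbf u_n\approx\mathbf v_n$: this has to be checked against the known shape of the identities of the $6$-element Brandt monoid, and is constrained by $B_2^1$ being a monoid, so the identity must remain valid under substituting $1$ for any letter. Equivalently, one verifies that $C_n$ violates one of the identities used in Perkins' proof that the identities of $B_2^1$ have no finite basis; the semigroups $C_n$ were introduced by Kleiman \cite{Kle79} with exactly such separations in mind.

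Item (iv) is the technical heart, and the step I expect to be the real obstacle. Since $M_k(n)$ is finite, membership in the inverse semigroup variety generated by $B_2^1$ amounts to being a homomorphic image of an inverse subsemigroup of some power $(B_2^1)^N$. The rank-$\le1$ ideal of $M_k(n)$ (a Brandt semigroup of degree $2n+1$) and the Rees quotient of $M_k(n)$ by it --- which is the $0$-direct union of $n-1$ copies of the $5$-element Brandt semigroup with a three-element semilattice --- are separately divisors of powers of $B_2^1$ (all finite semilattices and all combinatorial Brandt semigroups lie in this variety, though for the Brandt semigroups this is not obvious). However $M_k(n)$ admits no retraction onto its rank-$\le1$ ideal, so it is not a subdirect product of these two pieces, and one genuinely needs a cover $\widetilde M\twoheadrightarrow M_k(n)$ by an inverse subsemigroup $\widetilde M\le(B_2^1)^N$. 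This is where deleting $c_k$ and $c_k^{-1}$ is essential: it splits the generating chain into the two halves $c_1,\dots,c_{k-1}$ and $c_{k+1},\dots,c_n$, acting on disjoint index sets, and this is precisely what allows the cover to be assembled from coordinates each of which ``looks at'' only the two indices moved by a single surviving generator; no such cover exists for $C_n$, whose chain is unbroken (which is also the reason behind item (i)). Constructing $\widetilde M$ and verifying that its coordinate projections separate the points of $M_k(n)$ is where the bulk of the work lies; this is in essence Kleiman's construction in \cite{Kle79}, for whom the $C_n$ and the $M_k(n)$ were designed with exactly this behaviour.
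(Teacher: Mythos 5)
Your description of $C_n$ is correct and your arguments for items (ii) and (iii) are sound and complete. Indeed, the only full-rank products of two generators or inverses are $c_jc_j^{-1}$ and $c_j^{-1}c_j$, so the rank-$2$ part of $C_n$ is exactly the $4n$-element set you list, the rank-$\le 1$ part is the Brandt ideal of all matrix units together with $0$, every element of $C_n$ is either idempotent or squares to $0$ (giving (ii)), and a rank-$2$ product can only equal $c_k^{\pm 1}$ if one factor already is $c_k^{\pm 1}$ (giving (iii)). These two items are exactly the ones the paper dismisses as ``easy to verify'' and ``clear'', so here you have supplied more detail than the source. One small slip: your formulas $c_jc_{j-1}\cdots c_i=E_{j+1,i}$ and $c_ic_{i+1}\cdots c_j=E_{n+i,\,n+j+1}$ hold only for $i<j$; for $i=j$ the single generator $c_j$ has rank $2$ and is not a matrix unit.

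Items (i) and (iv), however, are not proved in your text. For (i) you say you ``would look for'' a word $\mathbf u_n$ and a companion $\mathbf v_n$, and you yourself flag the genuinely delicate point --- verifying that $(B_2^1,\cdot)$ satisfies $\mathbf u_n\approx\mathbf v_n$ --- without carrying it out; no identity is exhibited and nothing is checked in $B_2^1$. For (iv) you correctly identify that a subdirect decomposition into the Brandt ideal and the Rees quotient cannot work and that an explicit cover inside a power of $B_2^1$ must be built, but you do not build it (and note that $M_k(n)$ is \emph{not} generated by the surviving $c_j$, $j\ne k$, since all matrix units remain, so the ``split chain'' picture needs care). To be fair, the paper itself proves neither item from scratch: it cites Lemma~3 of \cite{Kle82} for (i) and Property~(C) of \cite{Kle79} for (iv), and the latter is acknowledged there to be long and complicated. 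So your gaps coincide precisely with the places where the paper leans on the literature; the fix is not to complete these constructions yourself but to replace ``I would look for\dots'' with the concrete citations (or, for (i), with one of Perkins' identities \cite{Per69} and an explicit evaluation in $C_n$ falsifying it, which your matrix description of $C_n$ would make routine once the identity is fixed).
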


\begin{proof}
(i) This property was established in \cite[Lemma 3]{Kle82} by exhibiting, for each $n\ge 2$, a semigroup identity that holds in $(B_2^1,\cdot)$ and fails in $(C_n,\cdot)$.

(ii) This is easy to verify (and also follows from the proof of Lemma 1 in \cite{Kle79}).

(iii) This is clear (and is a part of Lemma 1 in \cite{Kle79}).

(iv) This is Property (C) in \cite{Kle79}.
\end{proof}

\begin{remark}
Items (i)--(iii) of Lemma \ref{lem:cn} are easy. In contrast, the proof of (iv) in \cite{Kle79} is long and complicated. We mention in passing that now the proof can be radically simplified by using a deep result by Ka\softd{}ourek \cite{Kad91} who provided an effective membership test for the inverse semigroup variety generated by $(B_2^1,\cdot,{}^{-1})$.
\end{remark}

\begin{theorem}
\label{thm:b21-ais-nfb}
The semiring identities of the additively idempotent semiring $(B_2^1,\oplus,\cdot)$ admit no basis involving only finitely many variables, and hence, no finite basis.
\end{theorem}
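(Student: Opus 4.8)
I would use the standard route to a non-finite-variable-basis result: for every $N\ge 1$ produce an \ais\ $A_N$ that does \emph{not} lie in the variety $\mathcal{V}$ generated by $(B_2^1,\oplus,\cdot)$, yet satisfies every identity of $\mathcal{V}$ in which at most $N$ variables occur. Granting this, the theorem is immediate: if $\mathcal{V}$ were defined by a set $\Sigma$ of identities collectively involving only finitely many variables, we could rename those variables so that all of them lie among $x_1,\dots,x_N$ for a suitable $N$, and then $A_N$, satisfying every $\le N$-variable identity of $\mathcal{V}$ and hence $\Sigma$, would belong to $\mathcal{V}$ --- a contradiction.

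For $A_N$ I would take $(C_n,\oplus,\cdot)$ with $n>N$. By Lemma~\ref{lem:cn}(ii) the semigroup $C_n$ satisfies $x^2\approx x^3$, so Lemma~\ref{lem:b21-as-ais}, applied with its parameter equal to $2$, turns $(C_n,\cdot,{}^{-1})$ into an \ais\ with $x\oplus y=(xy^{-1})^2x$; the very same $(\cdot,{}^{-1})$-term defines the addition on $B_2^1$ and, since they too satisfy $x^2\approx x^3$, on each inverse subsemigroup $M_k(n)$. First, $(C_n,\oplus,\cdot)\notin\mathcal{V}$: membership in $\mathcal{V}$ would force $(C_n,\oplus,\cdot)$ to satisfy every \ais-identity valid in $(B_2^1,\oplus,\cdot)$, in particular every semigroup identity valid in $(B_2^1,\cdot)$, so the multiplicative reduct $(C_n,\cdot)$ would lie in the semigroup variety generated by $(B_2^1,\cdot)$, contradicting Lemma~\ref{lem:cn}(i).

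Second, I must check that every identity $u\approx v$ of $\mathcal{V}$ with at most $N<n$ variables holds in $(C_n,\oplus,\cdot)$. Suppose not, and let $\varphi$ assign the variables of $u\approx v$ to elements of $C_n$ with $u\varphi\ne v\varphi$. The $2n$ matrices $c_1,c_1^{-1},\dots,c_n,c_n^{-1}$ are pairwise distinct (this is clear from their definitions), hence each of the at most $N$ values $\varphi(x_i)$ belongs to the two-element set $\{c_k,c_k^{-1}\}$ for at most one index $k$; so fewer than $n$ of the indices $1,\dots,n$ are ``hit'', and we may pick $k_0$ with $\varphi(x_i)\in M_{k_0}(n)$ for every variable $x_i$. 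Since $M_{k_0}(n)$ is an inverse subsemigroup of $C_n$ (Lemma~\ref{lem:cn}(iii)) and $\oplus$ is a $(\cdot,{}^{-1})$-term, $(M_{k_0}(n),\oplus,\cdot)$ is a sub-\ais\ of $(C_n,\oplus,\cdot)$, so $u\approx v$ fails in it as well. On the other hand, Lemma~\ref{lem:cn}(iv) places $(M_{k_0}(n),\cdot,{}^{-1})$ in the inverse semigroup variety generated by $(B_2^1,\cdot,{}^{-1})$; rewriting every occurrence of $\oplus$ in $u$ and $v$ as $(xy^{-1})^2x$ converts $u\approx v$ into a $(\cdot,{}^{-1})$-identity valid in $(B_2^1,\cdot,{}^{-1})$, hence valid in $(M_{k_0}(n),\cdot,{}^{-1})$, whence $u\approx v$ holds in $(M_{k_0}(n),\oplus,\cdot)$ after all --- a contradiction. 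This completes the verification, and with it the proof.

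Each step is individually light; the one point demanding care is the bookkeeping in the last paragraph --- that the addition on every $M_k(n)$ is literally the restriction of the addition on $C_n$ (which is exactly why the uniform identity $x^2\approx x^3$ of Lemma~\ref{lem:cn}(ii) is invoked) and that a semiring identity true on $(B_2^1,\oplus,\cdot)$ survives being rewritten in the $(\cdot,{}^{-1})$-signature and transported to $M_{k_0}(n)$ via the inclusion of Lemma~\ref{lem:cn}(iv). The real weight of the argument rests on Lemma~\ref{lem:cn}, and in particular on item (iv).
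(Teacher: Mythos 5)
Your proposal is correct and follows essentially the same route as the paper: the series $(C_n,\oplus,\cdot)$ with the term-defined addition, the pigeonhole reduction of any evaluation of a short identity into some $M_k(n)$, the rewriting of $\oplus$ as the $(\cdot,{}^{-1})$-term so that Lemma~\ref{lem:cn}(iv) applies, and the final contradiction with Lemma~\ref{lem:cn}(i) via the multiplicative reduct. The only cosmetic difference is that you phrase the key step contrapositively (assuming a failing evaluation) and make explicit that $(M_k(n),\oplus,\cdot)$ is a sub-\ais, which the paper leaves implicit.
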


\begin{proof}
Arguing by contradiction, assume that $(B_2^1,\oplus,\cdot)$ has an identity basis $\Sigma$ such that each identity $u\approx v$ in $\Sigma$ involves less than $n$ variables. Consider the inverse semigroup $(C_n,\cdot,{}^{-1})$. By Lemmas~\ref{lem:b21-as-ais} and~\ref{lem:cn}(ii), the addition defined by $x\oplus y:=(xy^{-1})^2x$ makes $(C_n,\oplus,\cdot)$ an \ais. Consider an arbitrary evaluation $\varepsilon$ of variables $x_1,\dots,x_\ell$ involved in the identity $u\approx v$ in this \ais. By the pigeonhole principle, there exists an index $k\in\{1,\dots n\}$ such that neither $c_k$ nor $c_k^{-1}$ belongs to the set $\{\varepsilon(x_1),\dots,\varepsilon(x_\ell)\}$ as this set contains at most $\ell<n$ elements. Thus, $\{\varepsilon(x_1),\dots,\varepsilon(x_\ell)\}\subset M_k(n)$.

Since $x\oplus y$ expresses as $(\cdot,{}^{-1})$-term, one can rewrite the identity $u\approx v$ into an identity $u'\approx v'$ in which $u'$ and $v'$ are $(\cdot,{}^{-1})$-terms.
Since $u\approx v$ holds in $(B_2^1,\oplus,\cdot)$, the rewritten identity $u'\approx v'$ holds in the inverse semigroup $(B_2^1,\cdot,{}^{-1})$. By Lemma \ref{lem:cn}(iv) the latter identity holds also in the inverse semigroup $(M_k(n),\cdot,{}^{-1})$, and so $u'$ and $v'$ take the same value under every evaluation of the variables $x_1,\dots,x_\ell$
in $M_k(n)$. Hence $\varepsilon(u)=\varepsilon(u')=\varepsilon(v')=\varepsilon(v)$. We conclude that the identity $u\approx v$ holds in the \ais\ $(C_n,\oplus,\cdot)$. Since an arbitrary identity from $\Sigma$ holds in $(C_n,\oplus,\cdot)$, this \ais\ belongs to the \ais\ variety generated by $(B_2^1,\oplus,\cdot)$. This, however, contradicts Lemma \ref{lem:cn}(i), according to which even the semigroup reduct $(C_n,\cdot)$, does not belongs to semigroup variety generated by $(B_2^1,\cdot)$.
\end{proof}

\begin{remark}
To the best of my knowledge, the result of Theorem~\ref{thm:b21-ais-nfb} has not been published up to now. However, after preparing the present article I have learnt that the result has also been obtained by colleagues in Xi'an and Melbourne but with an entirely unrelated proof.
\end{remark}

I mention also a related paper by Dolinka \cite{Dol07} where he introduces a 7-element ai-semiring denoted $\Sigma_7$ and proves that its identities are not finitely based. The semigroup reduct of $\Sigma_7$ is just the monoid $B_2^1$ with an extra zero adjoined so that $(\Sigma_7,\cdot,{}^{-1})$ and $(B_2^1,\cdot,{}^{-1})$ satisfy the same inverse semigroup identities. However, the addition in $\Sigma_7$ is not derived from its inverse semigroup structure, and one can easily see that the semiring identities of $(\Sigma_7,+,\cdot)$ and $(B_2^1,\oplus,\cdot)$ are essentially different. It should be also mentioned that in \cite{Dol07} Dolinka actually considers \ais{}s with 0 as algebras of type (2,2,0).

\begin{remark}
Leech~\cite{Leech95} defined an \emph{inverse algebra} as an algebra $(A,\cdot,\wedge,{}^{-1},1)$ of type $(2,2,1,0)$ such that the reduct  $(A,\cdot,{}^{-1},1)$  is an inverse monoid, the reduct $(A,\wedge)$ is a meet semilattice, and the natural partial order of the inverse monoid coincides with that of the semilattice. Clearly, $(B_2^1,\cdot,\oplus,{}^{-1},E)$ constitutes an inverse algebra in Leech's sense, and the above proof of Theorem~\ref{thm:b21-ais-nfb} can be easily adapted to show that $(B_2^1,\cdot,\oplus,{}^{-1},E)$ has no finite identity basis also as such algebra.
\end{remark}

\small

\end{document}